\DeclareMathOperator{\Supp}{Supp}
\DeclareMathOperator{\girth}{girth}
\DeclareMathOperator{\geod}{geod}
\title{Embeddable box spaces of free groups}
\author{A. Khukhro\\ 
\small
\textit{Universit\'{e} de Neuch\^{a}tel,} \   \texttt{a.khukhro@gmail.com}}
\theoremstyle{plain}
\newtheorem*{thm}{Theorem}
\newtheorem{Thm}{Theorem}
\newtheorem{Lemma}[Thm]{Lemma}
\newtheorem{Prop}[Thm]{Proposition}
\newtheorem{Corollary}[Thm]{Corollary}
\theoremstyle{definition}
\newtheorem*{definition}{Definition}
\newtheorem*{acknow}{Acknowledgements}
\begin{document}
\maketitle

\begin{abstract}
We generalize the construction of Arzhantseva, Guentner and Spakula of a box space of the free group which admits a coarse embedding into Hilbert space. We show that for a finitely generated free group, the box space corresponding to the derived $m$-series (for any integer $m\geq 2$) coarsely embeds into Hilbert space. This gives new examples of metric spaces with bounded geometry which coarsely embed into Hilbert space but do not have Yu's property A.
\end{abstract}


\section*{Introduction}
A \emph{coarse embedding} of one metric space into another generalizes the notion of a quasi-isometric embedding, by allowing the functions which control how the metric is distorted to be non-linear. 
\begin{definition}
Let $(X,d_X)$ and $(Y,d_Y)$ be metric spaces.
$X$ \emph{coarsely embeds} into $Y$ if there is a map $F:X\longrightarrow Y$ such that there exist non-decreasing functions $\rho_\pm: \mathbb{R}_+ \longrightarrow \mathbb{R}_+$ with $\lim_{t\rightarrow \infty}\rho_\pm(t)=\infty$ and
\begin{equation*}
\rho_-(d_X(x,x'))\leq d_Y(F(x),F(x'))\leq \rho_+(d_X(x,x'))
\end{equation*}
for all $x,x' \in X$. $X$ and $Y$ are \emph{coarsely equivalent} if there exists a coarse embedding $F:X\longrightarrow Y$ and $C>0$ such that for each $y\in Y$, $d_Y(y,F(X))<C$.
\end{definition}
For Cayley graphs of finitely generated groups (and, more generally, all quasi-geodesic spaces), if two such spaces are coarsely equivalent, then they are necessarily quasi-isometric.
 
We are interested in spaces which admit a coarse embedding into Hilbert space, since it implies the coarse Baum--Connes conjecture and, in the case that the space is a Cayley graph of a finitely generated group, the Novikov conjecture \cite{Yu}. In the same remarkable paper \cite{Yu}, Yu defines a geometric property which implies coarse embeddability into Hilbert space. We give an equivalent definition from \cite{Tu}.
\begin{definition}[\cite{Yu}, \cite{Tu}]
A discrete metric space $(X,d)$ with bounded geometry has \emph{property A} if and only if for every $R>0$ and $\varepsilon >0$ there exists an $S>0$ and a  function $\phi: X \longrightarrow \ell^2(X)$ such that $\|\phi(x)\|=1$ for all $x \in X$ and such that for all $x_1, x_2 \in X$:
\begin{description}
\item[$(1)$]
if $d(x_1,x_2)\leq R$ then $|1-\left\langle \phi(x_1), \phi(x_2)\right\rangle| \leq \varepsilon$, and
\item[$(2)$]
$\Supp\phi(x) \subset B_{S}(x)$ for all $x\in X$.
\end{description}
\end{definition}
One of the few examples of spaces without property A can be given in the form of \emph{box spaces}, which are spaces with bounded geometry constructed using quotients of residually finite groups.

Given a sequence of metric spaces $\{X_n,d_n\}$, their \emph{coarse disjoint union} is the space $\sqcup_n X_n$ with metric $d$ such that $d$ is $d_n$ when restricted to each component $X_n$, and the distance between two distinct components is chosen to be greater than the maximum of their diameters (note that any two such choices of metric result in coarsely equivalent spaces).
Let $G$ be a finitely generated residually finite group and let $\{N_i\}$ be a collection of finite index nested normal subgroups of $G$, for which the intersection $\cap_{i\in \mathbb{N}} N_i$ is trivial.
Note that for a finitely generated residually finite group $G$, there always exists a such a collection of subgroups $\{N_i\}$.

\begin{definition}
The \emph{box space} $\Box_{\{N_i\}} G$ of $G$ corresponding to $\{N_i\}$ is the coarse disjoint union $\sqcup_i G/N_i$ of finite quotient groups of $G$, where each quotient is endowed with the Cayley graph metric induced by the image of the generating set of $G$.
\end{definition}

For a residually finite group $G$, and $\{N_i\}$ a sequence of subgroups as above, we have the following links between analytic properties of $G$ and geometric properties of the box space:
\begin{align*}
G \text{ amenable } &\Longleftrightarrow \Box_{\{N_i\}}G \text{ property A,}\\
G \text{ Haagerup } &\Longleftarrow \Box_{\{N_i\}}G \text{ coarsely embeddable into Hilbert space,}\\
G \text{ property (T) } &\Longrightarrow \Box_{\{N_i\}}G \text{ expander.}
\end{align*}
The proofs of the first two statements can be found in \cite{Roe}, and the third in \cite{Mar}.
Note that the last two implications are not reversible. When property (T) above is replaced by a weaker property called property ($\tau$) \cite{LZ}, the implication becomes an equivalence. 
In \cite{WY}, Willett and Yu define a new geometric property called \emph{geometric property (T)}. A box space having this property is equivalent to the group having property (T). A recent result of Chen, Wang and Wang characterizes the Haagerup property for residually finite groups in terms of their box spaces admitting a \emph{fibred coarse embedding} into Hilbert space \cite{CWW}. 

The equivalence between amenability of a group and property A of its box space provides us with a source of examples of spaces without A.
Box spaces were also the first examples of spaces which coarsely embed into Hilbert space but do not have property A. Such a space with unbounded geometry was first exhibited by Nowak \cite{Now}, in the form of a disjoint union of cubes of increasing dimension. The first bounded geometry example was later given by Arzhantseva, Guentner and Spakula. 
\begin{definition}
Given $m\in \mathbb{N}$ and a group $G$, the \emph{derived $m$-series} of $G$ is a sequence of subgroups defined inductively by $G_1=G$, $G_{i+1}=[G_i,G_i]G_i^m$, where $G_i^m$ is the subgroup of $G$ generated by $m$th powers of elements of $G_i$.
\end{definition}
When $G$ is free, the intersection $\cap G_i$ of all the $G_i$ is trivial by a theorem of Levi (see Proposition 3.3 in Chapter 1 of \cite{LS}), since each $G_i$ is a proper characteristic subgroup of the previous $G_{i-1}$.
For free groups it thus makes sense to talk about the box space corresponding to the derived $m$-series, for $m\in \mathbb{N}$. 

\begin{thm}[\cite{AGS}]
Given a finitely generated free group, the box space corresponding to the derived $2$-series coarsely embeds into Hilbert space.
\end{thm}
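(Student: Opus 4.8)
The plan is to exploit the layered structure of the quotients $F/F_n$. Write $X_n = \mathrm{Cay}(F/F_n)$ ($n\ge 2$) for the finite Cayley graphs making up the box space and $V_n = F_n/F_{n+1}$ for the $n$-th layer. Since $\mathrm{Cay}(F)$ is a tree on which $F_n$ acts freely, $X_n$ is the quotient of that tree by $F_n$, so $\pi_1(X_n)\cong F_n$; consequently the covering map $X_{n+1}\to X_n$ is regular with deck group $V_n = F_n/[F_n,F_n]F_n^2 \cong H_1(X_n;\mathbb{F}_2)$, an $\mathbb{F}_2$-vector space — that is, $X_{n+1}$ is precisely the $\mathbb{F}_2$-homology cover of $X_n$. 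Thus the box space is the coarse disjoint union of an iterated tower $\cdots \to X_{n+1}\to X_n \to \cdots \to X_2$ of $(\mathbb{Z}/2)$-vector-space covers, with $X_2 = \mathrm{Cay}((\mathbb{Z}/2)^k)$ a Hamming cube, and the task becomes to control such towers coarsely.

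First I would reduce to a single uniform estimate: a coarse disjoint union $\sqcup_n X_n$ admits a coarse embedding into Hilbert space if and only if there are maps $f_n\colon X_n\to\mathcal{H}$ and non-decreasing functions $\rho_\pm\colon\mathbb{R}_+\to\mathbb{R}_+$ with $\lim_{t\to\infty}\rho_\pm(t)=\infty$, all independent of $n$, such that $\rho_-(d_{X_n}(x,y))\le\|f_n(x)-f_n(y)\|\le\rho_+(d_{X_n}(x,y))$ for all $n$ and all $x,y\in X_n$ (distinct components being arbitrarily far apart, one places each $f_n(X_n)$ in its own orthogonal affine subspace and the cross-component estimates are automatic). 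Equivalently it suffices to build such a family into $L^1$, since $L^1$ embeds coarsely into $\mathcal{H}$ via the negative-definite kernel $\|f-g\|_1$.

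Next I would construct the $f_n$ inductively along the tower, writing $f_{n+1}=f_{n+1}^{\mathrm{base}}\oplus f_{n+1}^{\mathrm{fibre}}$, where $f_{n+1}^{\mathrm{base}}$ is the pullback along $X_{n+1}\to X_n$ of the map $f_n$ already built, and $f_{n+1}^{\mathrm{fibre}}$ records the position of a vertex inside its $V_n$-fibre over $X_n$, each $V_n$-coordinate being weighted by a distance in $X_n$ (truncated at a scale $R$) to the edges whose lifts to $X_{n+1}$ join different sheets. Crossing one edge of $X_{n+1}$ moves the base coordinate by a single edge of $X_n$ and alters a controlled amount of fibre data, so the upper bound is immediate and uniform. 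For the lower bound I would split on a pair $x,y\in X_{n+1}$ with images $\bar x,\bar y$ in $X_n$: if $d_{X_n}(\bar x,\bar y)$ is large, then already $\|f_{n+1}(x)-f_{n+1}(y)\|\ge\rho_-(d_{X_n}(\bar x,\bar y))$, which suffices since $d_{X_n}(\bar x,\bar y)\le d_{X_{n+1}}(x,y)$; if instead $\bar x,\bar y$ are close while $x,y$ are far, then the homological description of the cover forces every path from $x$ to $y$ to make long detours in $X_n$ in order to realise the prescribed homology class, and this is exactly the configuration that the distance-weighted fibre term is designed to detect. To upgrade the scale-$R$ estimate to an honest coarse embedding, one finally takes a weighted $\ell^2$-sum of the constructions over $R=1,2,4,\dots$, a standard device.

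The main obstacle is the uniform-in-$n$ lower bound, i.e.\ preventing the inductive construction from degrading: at each level there is a small loss in comparing $d_{X_{n+1}}$ with the ``base-plus-fibre'' quantity, and one must ensure that the accumulated loss over the infinite tower stays bounded so that $\rho_-^{-1}$ does not blow up. This is where the derived $2$-series is essential: because $V_n$ is the \emph{full} $\mathbb{F}_2$-homology of $X_n$, a geodesic of $X_{n+1}$ is forced to traverse a set of edges of $X_n$ that is pinned down by the target homology class rather than merely one of many competing sets, which yields a clean ``travelling salesman through a prescribed edge set'' lower bound that the fibre term can match; combined with bounded geometry (only boundedly many of the relevant edges meet any ball of given radius) this keeps the comparison constants under control. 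The remaining points — properness of the assembled $\rho_\pm$, the cross-component estimates, and the coarse embedding $L^1\hookrightarrow\mathcal{H}$ — are routine.
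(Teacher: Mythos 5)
Your reduction to a uniform family of maps $f_n$ with common control functions $\rho_\pm$ is correct, and you rightly identify $X_{n+1}$ as the $\mathbb{F}_2$-homology cover of $X_n$. But the construction you build on top of this has a genuine gap, in two places. First, the case split for the lower bound is logically backwards: when $d_{X_n}(\bar x,\bar y)$ is large you conclude $\|f_{n+1}(x)-f_{n+1}(y)\|\ge\rho_-(d_{X_n}(\bar x,\bar y))$ and say this ``suffices since $d_{X_n}(\bar x,\bar y)\le d_{X_{n+1}}(x,y)$'' --- but a lower bound in terms of the \emph{smaller} quantity gives no control of $d_{X_{n+1}}(x,y)$, which can be arbitrarily large while $\bar x,\bar y$ stay at bounded distance (points in distant sheets over nearby base points). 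So in every nontrivial case the entire burden falls on the fibre term, and that term is never actually estimated; the ``travelling salesman through a prescribed edge set'' paragraph is an assertion, not an argument. Second, even granting a per-level estimate, your scheme is inductive over the whole tower $X_2\to\cdots\to X_{n+1}$, so any loss in comparing $d_{X_{n+1}}$ with the base-plus-fibre quantity compounds $n$ times; you correctly flag this as the main obstacle, but uniformity of $\rho_\pm$ in $n$ is exactly what the theorem needs and you give no mechanism that prevents the degradation.

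The proof in [AGS] (and its generalization in this paper) avoids the induction entirely, and that is the idea you are missing. One defines on each cover $\widetilde X_n=X_{n+1}$ a single auxiliary metric $d_Q$ using only the one covering step $\widetilde X_n\to X_n$: collapsing clouds maps $\widetilde X_n$ onto the cube $\oplus^{r}\mathbb{Z}_2$, and $d_Q$ is the induced wall (Hamming) metric, averaged over spanning trees. This metric embeds into $\ell^1$, hence into Hilbert space, with constants independent of $n$ --- no embedding of the base $X_n$ is ever used. The comparison with the graph metric is then a girth argument rather than an induction: $d_Q\le d$ always, and $d_Q(x,y)<\girth(X_n)$ if and only if $d(x,y)<\girth(X_n)$, in which case the two are equal (Proposition~\ref{Compare} here, Proposition 3.11 of [AGS]); since $\girth(X_n)\to\infty$ for a box space of a free group, the identity map between $(\sqcup_n\widetilde X_n,d)$ and $(\sqcup_n\widetilde X_n,d_Q)$ is a coarse equivalence and no error accumulates. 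If you want to salvage your approach, drop the base term $f_{n+1}^{\mathrm{base}}$ altogether and prove that the fibre data alone recovers the graph metric exactly at scales below $\girth(X_n)$; that is the key lemma your proposal lacks.
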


We have the following stability result for the property of having a box space which coarsely embeds into Hilbert space.
\begin{thm}[\cite{Khu}]
Let $\Gamma$ be the semidirect product $H\rtimes G$ of two finitely generated residually finite groups $H$ and $G$ with a nested sequence of finite index characteristic subgroups $\{N_i\}$ of $H$ with $\cap N_i = 1$, such that $\Box_{\{N_i\}}H$ embeds coarsely into Hilbert space, and G is amenable. Then $\Gamma$ has a box space which coarsely embeds into Hilbert space.
\end{thm}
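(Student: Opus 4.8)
The plan is to produce an explicit box space of $\Gamma=H\rtimes G$ and then to coarsely embed it into Hilbert space by a permanence argument for group extensions with amenable (hence exact) quotient, carried out uniformly across the box space.

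First I would build the defining sequence of subgroups of $\Gamma$. Since $G$ is finitely generated and residually finite, choose nested finite index normal subgroups $\{M_j\}$ of $G$ with $\bigcap_j M_j=1$, and then replace $M_i$ by $M_i\cap\bigcap_{j\le i}\ker\!\big(G\to\Aut(H/N_j)\big)$; the sequence stays nested, of finite index, with trivial intersection, and now $M_i$ acts trivially on $H/N_i$. Since $N_i$ is characteristic in $H$ and $M_i$ is normal in $G$, the set $K_i:=N_i\rtimes M_i$ is a subgroup of $\Gamma$ of index $[H:N_i][G:M_i]$, and $\{K_i\}$ is nested with $\bigcap_i K_i=1$. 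A direct computation with the semidirect product law shows $K_i\trianglelefteq\Gamma$: conjugating $(n,m)\in K_i$ by $(h_0,g_0)$ gives an element whose $G$-coordinate is $g_0mg_0^{-1}\in M_i$ and whose $H$-coordinate is $\equiv h_0\cdot{}^{g_0mg_0^{-1}}h_0^{-1}\pmod{N_i}$, the latter being trivial modulo $N_i$ precisely because $g_0mg_0^{-1}\in\ker(G\to\Aut(H/N_i))$. Thus $\Box_{\{K_i\}}\Gamma=\sqcup_i\,\Gamma/K_i$ with $\Gamma/K_i\cong(H/N_i)\rtimes(G/M_i)$ is a box space of $\Gamma$; the quotient map $p_i\colon\Gamma/K_i\to G/M_i$ is $1$-Lipschitz and admits the obvious homomorphic section $\iota_i$.

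Then I would invoke the standard reformulation: $\Box_{\{K_i\}}\Gamma$ coarsely embeds into Hilbert space if and only if for all $R,\varepsilon>0$ there is $S>0$ such that for every $i$ there is a unit-vector-valued map $\xi_i\colon\Gamma/K_i\to\mathcal H_i$ with $\|\xi_i(x)-\xi_i(y)\|\le\varepsilon$ whenever $d(x,y)\le R$ and $\langle\xi_i(x),\xi_i(y)\rangle=0$ whenever $d(x,y)\ge S$, all constants independent of $i$. Such $\xi_i$ I would assemble from two pieces. Since $G$ is amenable, $\Box_{\{M_i\}}G$ has property A, hence supplies normalized functions $\phi_i^G\colon G/M_i\to\ell^2(G/M_i)$ that are finitely supported (in balls of a fixed radius) and almost constant on $R$-balls; pulling these back along $p_i$ controls the "$G/M_i$-direction". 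The hypothesis that $\Box_{\{N_i\}}H$ coarsely embeds supplies, under the same reformulation and uniformly in $i$, maps $\psi_i\colon H/N_i\to\mathcal K_i$; these I would transport into the fibers of $p_i$ (identified with $H/N_i$ via $\iota_i$) and tensor against the pulled-back $\phi_i^G$. The finite support of the $\phi_i^G$ localizes all comparisons over a bounded region of $G/M_i$, which is what makes property A of $\Box_{\{M_i\}}G$ (rather than mere coarse embeddability of it) do real work — exactly as in the known fact that a group with an exact quotient by a coarsely embeddable normal subgroup is coarsely embeddable.

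The step I expect to be the genuine obstacle is reconciling these two pieces, because $H/N_i$ is typically badly distorted inside $\Gamma/K_i$: the conjugation action of $G/M_i$ creates shortcuts, and the distortion can grow with $i$ — already for $H$ free and $G=\mathbb Z$ acting by an exponentially growing automorphism, an element of $H/N_i$ can have intrinsic Cayley length comparable to $\diam(H/N_i)$ while sitting at distance $O(1)$ from the identity in $\Gamma/K_i$. Hence neither the "$H$-coordinate" map $h\iota_i(b)\mapsto h$ nor its twist $h\iota_i(b)\mapsto{}^{b^{-1}}h$ is uniformly Lipschitz into $H/N_i$, so the coarse embedding of $\Box_{\{N_i\}}H$ does not transfer to the fibers of $p_i$ naively, and the tensor construction above is not yet even almost constant on $R$-balls. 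Overcoming this is where the amenability of $G$ must be used more fully than just through property A of $\Box_{\{M_i\}}G$, namely via approximately invariant (F\o lner-type) subsets of $G/M_i$: one should average the fiberwise embedding over such a set, chosen so that the distortion produced by the $\Aut(H/N_i)$-action on the relevant bounded-length data is smeared against the almost-invariance and contributes only a controlled error. Once such a construction is in place, verifying that $\xi_i$ is almost constant on $R$-balls and genuinely separates $S$-distant points with constants independent of $i$, together with the index and normality bookkeeping from the first step, should be routine.
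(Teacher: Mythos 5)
A preliminary caveat: this paper does not prove the statement at all --- it quotes it from \cite{Khu} --- so the comparison below is with the proof given there. Your first step, the construction of the box space $\Box_{\{K_i\}}\Gamma$ with $K_i=N_i\rtimes M_i$ after shrinking $M_i$ into $\ker\big(G\to\Aut(H/N_i)\big)$, is correct, complete, and is exactly the construction used in \cite{Khu}. The first genuine problem is the ``standard reformulation'' you invoke: demanding $\langle\xi_i(x),\xi_i(y)\rangle=0$ whenever $d(x,y)\ge S$, with $S=S(R,\varepsilon)$ uniform in $i$, is Tu's formulation of \emph{property A} (it is the definition quoted in this paper), not of coarse embeddability. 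If you could produce such $\xi_i$ you would have shown $\Box_{\{K_i\}}\Gamma$ has property A, hence that $\Gamma$ is amenable --- false whenever $H$ is free non-abelian, the case of interest. The correct target, and the only one your tensor construction could meet (since the hypothesis on $\Box_{\{N_i\}}H$ supplies decay of inner products, never exact vanishing), is $\sup_i\sup_{d(x,y)\ge S}|\langle\xi_i(x),\xi_i(y)\rangle|\to 0$ as $S\to\infty$.

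The distortion obstacle you single out is indeed the crux, but your proposed cure would not work as described. Averaging the fiberwise embedding over a F\o lner set $F\subseteq G/M_i$ still forces you to compare $\psi_i$ at pairs of points of $H/N_i$ differing by a conjugate ${}^{g}u$ with $g$ ranging over all of $F$; since $F$ must be large, these are precisely the unboundedly distorted elements you are worried about, and almost-invariance of $F$ only controls the change of support $bF\,\triangle\, b'F$, not the twisting inside the common support. The mechanism that actually works --- which you half-state in your second paragraph and then abandon --- is that the property A functions on $G/M_i$ are supported in $S'$-balls about the relevant point, so the only automorphisms ever applied to the length-$\le R$ data are ${}^{g}$ with $|g|\le S'$; the resulting finite set of elements of $H$ has some $d_H$-diameter $R'=R'(R,S')$, and, crucially, because $\cap M_i=1$ and $\cap N_i=1$ this data lifts to $\Gamma$ for $i$ large, so $R'$ is independent of $i$. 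One then invokes the coarse embeddability of $\Box_{\{N_i\}}H$ at scale $(R',\varepsilon)$ rather than $(R,\varepsilon)$, with the quantifiers ordered as: $S'$ from $(R,\varepsilon)$, then $R'$ from $(R,S')$, then $\psi_i$ from $R'$. This re-choice of scale and the uniformity-in-$i$ argument via triviality of the intersections are both missing from your sketch, and without them your $\xi_i$ are, as you yourself observe, not almost constant on $R$-balls; with them, no F\o lner averaging beyond what is already packaged into property A of $\Box_{\{M_i\}}G$ is needed.
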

In particular, this applies to (finitely generated free)-by-cyclic groups. 
In this paper, we generalize the result of \cite{AGS} in a different direction.
\begin{thm}
Given a finitely generated free group and an integer $m\geq 2$, the box space corresponding to the $m$-derived series coarsely embeds into Hilbert space.
\end{thm}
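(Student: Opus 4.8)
The plan is to produce coarse embeddings $f_i\colon X_i\to\mathcal H$ of the Cayley graphs $X_i:=\mathrm{Cay}(F/\Gamma_i,S)$ (here $\{\Gamma_i\}$ is the derived $m$-series of $F$) into a fixed Hilbert space, with control functions $\rho_\pm$ independent of $i$; this is equivalent to coarse embeddability of the box space into $\ell^2$, and it is to be achieved by running the construction of \cite{AGS} with $\mathbb Z/m$ in the role played there by $\mathbb Z/2$. I first isolate the structural facts that construction rests on, all of which persist for $m\geq2$. By induction, using that a finite-index subgroup of a free group is free of finite rank (Nielsen--Schreier): $\Gamma_2=[F,F]F^m$ has index $m^k$ in $F$, and if $\Gamma_i$ has finite index then it is free of rank $r_i$ and $\Gamma_{i+1}$ has index $m^{r_i}$ in it; hence every $\Gamma_i$ has finite index and $A_i:=\Gamma_i/\Gamma_{i+1}\cong(\mathbb Z/m)^{r_i}$ is finite abelian of exponent dividing $m$. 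The quotient maps give a tower of normal graph coverings $\cdots\to X_{i+1}\xrightarrow{\,p_i\,}X_i\to\cdots\to X_1=\{*\}$, with common universal cover the tree $\mathrm{Cay}(F,S)$, $\mathrm{Deck}(p_i)\cong A_i$ and $\pi_1(X_i)\cong\Gamma_i$. Finally, a cycle in $X_i$ is a reduced nontrivial element of $\Gamma_i$ read off as a word in $S$, so $\bigcap_i\Gamma_i=1$ forces the shortest such word, hence $\girth(X_i)$, to tend to infinity. (The extensions $1\to A_i\to F/\Gamma_{i+1}\to F/\Gamma_i\to1$ need not split — generators acquire larger order deeper in the tower — so \cite{Khu} does not apply directly and one must work with the graphs.)

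With these facts I would follow \cite{AGS}: assemble each $f_i$ as an $\ell^2$-sum over levels $j<i$ of ``layer contributions'' $h^{(i)}_j\colon X_i\to\mathcal K_j$ attached to the coverings $p_j$. Each $h^{(i)}_j$ is obtained by fixing a spanning tree of $X_j$, lifting it through $p_j$ to trivialise $p_j$ over that tree, recording from each vertex of $X_i$ the resulting $A_j$-valued position relative to the lifted tree, and post-composing with a fixed coarse embedding of $A_j$; the sum is weighted summably and adapted to the growth of the tower. The large girth makes $X_j$ isometric to a ball of the tree out to radius $\sim\girth(X_j)$, so the layer contributions record distances faithfully at that scale; abelianness of the layers is what lets the contributions be defined coherently down the tower. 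Only the finiteness and abelianness of the $A_j$, the large-girth estimate, and the free/finite-index structure are used, so the bulk of \cite{AGS} carries over verbatim.

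The sole place the value of $m$ enters is the choice of the fixed coarse embedding $A_j\cong(\mathbb Z/m)^{r_j}\hookrightarrow\mathcal K_j$, which must have control independent of $r_j$. This is supplied by the observation that the standard word metric of $(\mathbb Z/m)^{r}$ is an $\ell^1$-metric — each $\mathbb Z/m$ with the cycle metric embeds isometrically into $\ell^1$ via $j\mapsto(\text{indicator of a half-length arc at }j)$, and an $\ell^1$-sum of $\ell^1$-metrics is again $\ell^1$ — hence is of negative type, so $(\mathbb Z/m)^{r}$ embeds into $\ell^2$ with $\rho_-(t)=\rho_+(t)=\sqrt t$ uniformly in $r$ and $m$. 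For $m=2$ this is the Hamming-cube embedding used in \cite{AGS}; for general $m$ nothing else changes, so this is a purely finite-order modification.

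The main obstacle is the quantitative bookkeeping ensuring that, after summing the weighted layer contributions, $\rho_-$ is still bounded below by a function tending to infinity, uniformly in $i$. The tension is that the metric a layer $A_j$ inherits from $X_{j+1}$ is the standard word metric of $(\mathbb Z/m)^{r_j}$ rescaled bi-Lipschitzly (via Reidemeister--Schreier rewriting, by a factor of order $\diam X_j$), so the layer contributions degrade with depth; one must balance the weights against the joint growth of the ranks $r_j$, the diameters $\diam X_j$ and the girths $\girth(X_j)$, using the girth estimate to guarantee that far-apart vertices of $X_i$ already separate at a level controlled by their distance and are therefore detected with $i$-independent strength. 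These are precisely the estimates of \cite{AGS} for $m=2$; the task is to check they survive with $\mathbb Z/m$-coefficients, which they should, the only $m$-dependence being the soft facts above. I expect no conceptual difficulty beyond this careful re-run.
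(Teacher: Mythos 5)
Your structural setup (the tower of $\mathbb{Z}_m$-homology covers, $A_i\cong(\mathbb{Z}/m)^{r_i}$, girth tending to infinity) and your uniform embedding of $(\mathbb{Z}/m)^{r}$ into $\ell^2$ via the $\ell^1$/negative-type route are both correct; the latter plays the role that Nowak's theorem plays in the paper. The gap is in the step comparing the geometry of the covers with this algebraic data. You claim that, after fixing one spanning tree of $X_j$ and recording the $A_j$-valued position of a vertex relative to the lifted tree, ``the layer contributions record distances faithfully at that scale'' because of large girth. This is false for a single fixed tree: two points of the cover lying in the same lifted copy of the tree (the same ``cloud'') have identical $A_j$-position, hence layer-distance $0$, while their graph distance can be as large as the diameter of the tree --- large girth does not help. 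This is exactly the difficulty the paper must work around, and for $m\geq3$ it cannot use the AGS wall trick (removing the lifts of one edge cuts the cover into $m$ pieces, which is not a wall). Instead the paper \emph{averages} the cloud-distance over \emph{all} maximal spanning trees, defining $d_Q(x,y)=\frac{1}{N}\sum_T d_T(C^T_x,C^T_y)$ (this requires the extra combinatorial hypothesis, verified for Cayley graphs by symmetry, that the number of spanning trees omitting a given edge is independent of the edge), and then proves the nontrivial Proposition \ref{Compare}: $d_Q(x,y)<\girth(X)$ iff $d(x,y)<\girth(X)$, in which case $d_Q=d$. The proof of that proposition --- via $m$-repeated edges, $m$-shortcuts, and the boundary operator mod $m$ --- is the technical heart of the paper, and nothing in your proposal substitutes for it.

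Your proposed architecture is also different from the paper's (and from AGS's): both use only the single top covering $X_{i+1}\to X_i$ on each component, precisely because the girth comparison makes that one layer metrically faithful once one averages over trees; the coarse equivalence of $d$ and $d_Q$ on the coarse disjoint union then follows formally, with no weights to balance. The multi-layer weighted sum over all $j<i$ that you describe is the part you yourself flag as ``the main obstacle,'' and you do not carry it out; since the per-layer faithfulness it relies on fails for a fixed tree, the deferred bookkeeping cannot be completed as described. The proposal is therefore missing the one essential lemma.
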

The case $m=2$ is the main result of \cite{AGS}.
This is a first step towards classifying which box spaces of finitely generated free groups admit a coarse embedding into Hilbert space. Note that there exist box spaces of free groups which are expanders, and hence do not embed into Hilbert space.

From now on, we will say that a metric space is \emph{embeddable} if it embeds coarsely into Hilbert space, and $m$ will always be an integer $\geq 3$. 

\begin{acknow}
The author thanks Alain Valette for enjoyable discussions, and for his very helpful comments on a draft of this paper. The author also wishes to thank Antoine Gournay for simplifying part of the proof of Proposition \ref{Compare}, which was previously combinatorial in nature. 
\end{acknow}

\section*{Outline}
Let $G_i$ be the derived 2-series of a finitely generated free group $G=\mathbb{F}_n$.
In \cite{AGS}, the authors use the fact that each $G/G_i$ is the $\mathbb{Z}_2$-homology cover of the previous $G/G_{i-1}$ to induce a wall structure on each $G/G_i$. This wall structure gives rise to a wall metric on the box space $\Box_{\{G_i\}}G$ corresponding to the derived 2-series, with respect to which the box space embeds into Hilbert space. The final step is showing that the wall metric and the original metric are coarsely equivalent, using the fact that the girth of the graphs tends to infinity. 

The main idea of our generalization is that instead of using a wall space structure coming from the covering groups being cubes $\oplus^n \mathbb{Z}_2$, one can obtain a metric with respect to which the box space embeds using the result of Nowak that given any finite group $F$, the disjoint union $\sqcup_{n\in \mathbb{N}} \oplus^n F$ coarsely embeds into Hilbert space. This result appears as part of Nowak's example of a non-bounded geometry metric space which embeds into Hilbert space but does not have property A \cite{Now}. 

We consider the case $F=\mathbb{Z}_m$ for $m\geq 3$. Given a sequence of finite $2$-connected graphs $\{X_n\}$ with girth tending to infinity (and an additional technical assumption about the symmetry of the graphs), we look at the coarse disjoint union $\sqcup_n \widetilde{X}_n$ of their covers corresponding to the quotients $$\pi_1(X_n)\longrightarrow \pi_1(X_n)/[\pi_1(X_n),\pi_1(X_n)]\pi_1(X_n)^m,$$ which we call the $\mathbb{Z}_m$-\emph{homology covers}, with their natural graph metrics. We use the $\oplus^n \mathbb{Z}_m$ structure induced in each cover to define a new metric on this coarse disjoint union. Thanks to the result of Nowak, one can show that $\sqcup_n \widetilde{X}_n$ coarsely embeds into Hilbert space with respect to this new metric. We then use large girth to show that the two metrics on $\sqcup_n \widetilde{X}_n$ are coarsely equivalent. 

To find new examples of embeddable box spaces, we look at the derived $m$-series $\{G_i\}$ of a finitely generated free group $G$ for $m\in \mathbb{N}$. Each successive quotient in $\Box_{\{G_i\}}G$ is the $\mathbb{Z}_m$-homology cover of the previous quotient. Being a box space of the free group, we have $\girth(G/G_i)\rightarrow \infty$ and so we can apply the general results described above to show that $\Box_{\{G_i\}}G$ is embeddable.

\section*{Covers and metrics}

Let us first describe the general construction of the cover $\widetilde{X}$ of a finite graph $X$ corresponding to a finite quotient $K$ of $\pi_1(X)$. Throughout, we will assume that $X$ is $2$-connected, i.e. removing any edge leaves $X$ connected. 
Let $\rho$ be the surjective homomorphism $\rho: \pi_1(X) \twoheadrightarrow K$.

Denote the vertex set of  $X$ by $V(X)$ and the edge set by $E(X)$. Choose a maximal tree $T\subset X$. The set of edges $\{e_1, e_2, ..., e_r\}$ which are not in the maximal tree $T$ correspond to free generators of $\pi_1(X)$, and so we can consider their image under the quotient map $\rho$. The cover of $X$ corresponding to $\rho$ is the finite graph $\widetilde{X}$ with vertex set given by $$V(\widetilde{X})= V(X)\times K$$ and edge set given by $$E(\widetilde{X})= E(X)\times K.$$ We now just need to specify the vertices which are connected by each edge in $E(\widetilde{X})$. 

Given an edge $(e,k)\in E(\widetilde{X})$ (where $e\in E(X)$ and $k\in K$), let $v$ and $w$ be the vertices of $X$ connected by $e$. There are two cases: $e\in T$ and $e\notin T$. If $e\in T$, let $(e,k)$ connect the vertices $(v,k)$ and $(w,k)$. If $e\notin T$, let $(e,k)$ connect $(v,k)$ and $(w,\rho(e)k)$. The graph $\widetilde{X}$ defined in this way is the cover of $X$ corresponding to $\rho: \pi_1(X) \twoheadrightarrow K$. Note that the cover we obtain does not depend on the choice of spanning tree or on the chosen orientation of edges, i.e. it is unique up to graph isomorphism commuting with the covering projections (see Proposition 2.2 of \cite{AGS}). 

The covering map $\pi: \widetilde{X}\rightarrow X$ is given by $(e,k)\mapsto e$ and $(v,k)\mapsto v$. We can consider the subgraphs $V(X)\times k$ as $k$ ranges over the elements of $K$. Following \cite{AGS}, we will call these subgraphs \emph{clouds}. Note that collapsing the clouds to points yields the Cayley graph of the group $K$ with respect to the generating set consisting of the images of the free generating set of $\pi_1(X)$.

\includegraphics[scale=0.53]{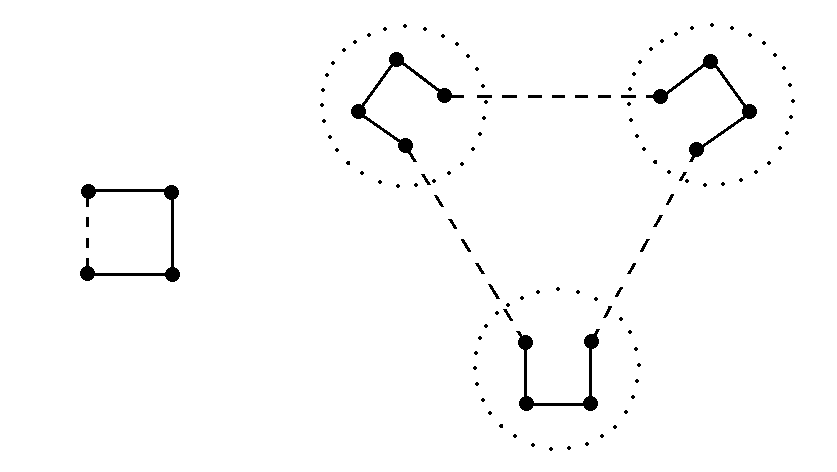}

$\ \ \ \ \ \ \ \ \ \ \ \ \ \ \ X \ \ \ \ \ \ \ \ \ \ \ \ \ \ \ \ \ \ \ \ \ \ \ \ \ \ \ \ \ \ \ \ \ \ \ \ \ \ \ \ \ \ \ \ \ \ \ \ \widetilde{X}$

In the example above, $\widetilde{X}$ is the cover of $X$ corresponding to the quotient $\rho: \pi_1(X)\cong \mathbb{Z} \twoheadrightarrow \mathbb{Z}_3$. We see copies of the (solid line) maximal tree of $X$ inside the three clouds corresponding to elements of $\mathbb{Z}_3$, with edges which are lifts of the edge not in the maximal tree of $X$ (represented by broken lines) connecting the clouds according to the quotient map $\rho$. 

We will concentrate on the case where the cover $\widetilde{X}$ corresponds to the quotient $$\pi_1(X)\longrightarrow \pi_1(X)/[\pi_1(X),\pi_1(X)]\pi_1(X)^m\cong \oplus^r \mathbb{Z}_m,$$ which we call the $\mathbb{Z}_m$-\emph{homology cover} of $X$. Note that the Cayley graph of $ \oplus^r \mathbb{Z}_m$ (where $r$ is the free rank of $\pi_1(X)$) with respect to the image of the free generating set of $\pi_1(X)$ is the same as taking the natural generating set for $\oplus^r \mathbb{Z}_m$, namely, one element from each copy of $\mathbb{Z}_m$. We will refer to the corresponding word metric as $d_T$. 

For each element $x\in \widetilde{X}$, denote by $C^T_x$ the cloud (with respect to the maximal spanning tree $T$) containing $x$.
Since collapsing the clouds of $\widetilde{X}$ to points gives us the space $(\oplus^r \mathbb{Z}_m, d_T)$, the clouds are in one-to-one correspondence with elements of $\oplus^r \mathbb{Z}_m$. We will refer to clouds and points in $\oplus^r \mathbb{Z}_m$ interchangeably.

We will now introduce two metrics on $\widetilde{X}$. The first, $d$, is the natural graph metric on $\widetilde{X}$. The second, $d_Q$, is a metric which we will see comes from the $\oplus^r \mathbb{Z}_m$ structure present in $\widetilde{X}$. For each $x, y \in \widetilde{X}$, choose a geodesic $\geod[x,y]$ between $x$ and $y$ with respect to the metric $d$. 

For each edge $e\in E(X)$, choose an orientation. 
Define a function $$\phi: E(X)\times \widetilde{X}\times\widetilde{X}\longrightarrow \mathbb{N}$$ by setting $\phi(e,x,y)$ to be the smallest non-negative residue modulo
$m$ of $$||\pi^{-1}(e)\cap \geod[x,y]|-|\pi^{-1}(e^{-1})\cap \geod[x,y]||,$$ where by $|\pi^{-1}(e)\cap \geod[x,y]|$, we mean the number of times that a lift of $e$ occurs in the geodesic (with positive orientation), and by $|\pi^{-1}(e^{-1})\cap \geod[x,y]|$, we mean the number of times a lift of the edge $e$ occurs with reversed orientation. Define $d_Q$ as follows:
\begin{equation*}
d_Q(x,y):= \sum_{e\in E(X)} \sum_{T: e\notin T} \frac{1}{N_e} \min\{\phi(e,x,y), m-\phi(e,x,y)\}.
\end{equation*}
Here, the first sum ranges over all edges in $X$, the second sum ranges over all maximal spanning trees $T$ of $X$ which do not contain a given edge $e$, and $N_e$ denotes the number of maximal spanning trees which do not contain a given edge $e$. Since $X$ is always assumed to be $2$-connected, the second sum is never empty and $N_e$ is non-zero for all $e$.

Suppose that $X$ is such that $N_e=N$ is independent of the choice of edge $e$ (which we will show to be the case in the situation we will be interested in). Then we have
\begin{equation*}
d_Q(x,y)=\frac{1}{N} \sum_{T} \sum_{e\notin T} \min\{\phi(e,x,y), m-\phi(e,x,y)\}.
\end{equation*}
Now note that given a maximal spanning tree $T$ and $x, y \in \widetilde{X}$, the sum $$\sum_{e\notin T} \min\{\phi(e,x,y), m-\phi(e,x,y)\}$$ is exactly the distance between the clouds $C^T_x$ and $C^T_y$ containing $x$ and $y$ respectively in $\oplus^r \mathbb{Z}_m$ with the metric $d_T$. 
This is because given an element $(z_1,...,z_r)$ in $\oplus^r \mathbb{Z}_m$, written additively, the geodesics from the identity to $(z_1,...,z_r)$ are exactly those paths which for each $i$, contain $z_i$ edges corresponding to the generator of the $i$th factor if $z_i\leq m/2$, or $m-z_i$ edges corresponding to the inverse of the generator of the $i$th factor if $z_i> m/2$, in any order. The minimum in the sum above therefore ensures that we get the $d_T$ geodesic. Thus, we have
$$d_Q(x,y)=\frac{1}{N} \sum_{T} d_T(C^T_x,C^T_y).$$
Note that this in particular proves that $d_Q$ is a pseudometric, since the triangle inequality is obvious from the above. We will see that $d(x,y)=0$ if and only if $x=y$ later: it will follow from Proposition \ref{Compare}, when we compare $d_Q$ with the metric $d$.

Note also that one has 
\begin{align*}
d_Q(x,y)&=\frac{1}{N} \sum_{T} \sum_{e\notin T} \min\{\phi(e,x,y), m-\phi(e,x,y)\}\\
&=\sum_{e\in E(X)} \sum_{T: e \notin T} \frac{1}{N} \min\{\phi(e,x,y), m-\phi(e,x,y)\}\\
&=\sum_{e\in E(X)} \min\{\phi(e,x,y), m-\phi(e,x,y)\}\\
&\leq \sum_{e\in E(X)}\phi(e,x,y)\\
&\leq\sum_{e\in E(X)}||\pi^{-1}(e)\cap \geod[x,y]|-|\pi^{-1}(e^{-1})\cap \geod[x,y]||\\
&\leq\sum_{e\in E(X)} |\pi^{-1}(e)\cap \geod[x,y]| + |\pi^{-1}(e^{-1})\cap \geod[x,y]|\\
&=|\geod[x,y]|= d(x,y).
\end{align*}

For us, $d_Q$ will be the metric which we will use to embed $\widetilde{X}$ into Hilbert space. However, we are interested in embedding $\widetilde{X}$ with its original metric, so we now need a way to compare the two metrics. The following proposition, which is inspired by Proposition 3.11 in \cite{AGS}, will do this for us. 

\begin{Prop}\label{Compare}
If $X$ and $\widetilde{X}$ are as described above, then for every $x, y \in \widetilde{X}$, we have
$$d_Q(x,y)<\girth(X) \iff d(x,y)<\girth(X)$$
and if the above inequalities hold, then $d_Q(x,y)=d(x,y)$.
\end{Prop}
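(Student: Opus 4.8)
The plan is to repackage $d_Q$ as the $\mathbb{Z}_m$-weight of a $1$-chain on $X$ and then analyse that chain's support. For a path $\sigma$ from $x$ to $y$ in $\widetilde{X}$ and an edge $e$ of $X$, let $w_e(\sigma)$ be the signed number of times $\sigma$ crosses a lift of $e$ (lifts of $e$ in the chosen orientation counted positively, the reverse negatively). Crossing a lift of a tree edge leaves the cloud coordinate of a point unchanged, while crossing a lift of $e\notin T$ changes it by $\pm\rho(e)$; hence for every maximal tree $T$ the difference of the cloud coordinates of $y$ and $x$ in $\oplus^r\mathbb{Z}_m$ has $\rho(e)$-coordinate $w_e(\sigma)\bmod m$ for each $e\notin T$. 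Since this difference does not depend on $\sigma$, and since every edge of the $2$-connected graph $X$ is a non-tree edge for at least one $T$, the residue $w_e(\sigma)\bmod m$ is independent of $\sigma$ for every edge $e$; write $\bar c\in C_1(X;\mathbb{Z}_m)$ for the resulting $1$-chain. Then $\partial\bar c=\pi(y)-\pi(x)$, and reading off the definition of $\phi$ together with the formula $d_Q(x,y)=\sum_e\min\{\phi(e,x,y),m-\phi(e,x,y)\}$ established above gives $d_Q(x,y)=\sum_e\min\{\bar c(e),m-\bar c(e)\}$; in particular $d_Q(x,y)\geq\#\supp\bar c$.

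I would then distinguish two cases according to whether $\supp\bar c$ contains a cycle of $X$. If it does, that cycle has at least $\girth(X)$ edges, so $d_Q(x,y)\geq\#\supp\bar c\geq\girth(X)$; moreover any $d$-geodesic $\sigma$ from $x$ to $y$ crosses a lift of each edge of $\supp\bar c$ (there $w_e(\sigma)\neq0$), so $d(x,y)\geq\#\supp\bar c\geq\girth(X)$ as well, and both inequalities in the statement fail. If $\supp\bar c$ contains no cycle, a boundary computation shows that every vertex of the subgraph $\supp\bar c$ other than $\pi(x)$ and $\pi(y)$ has degree at least $2$ in it; since a nonempty forest has leaves, this forces either $\bar c=0$ --- in which case $x$ and $y$ have equal projection and equal cloud coordinate, hence $x=y$ and $d_Q(x,y)=d(x,y)=0$ --- or $\supp\bar c$ equal to a single simple path $P$ in $X$ from $\pi(x)$ to $\pi(y)$, on each edge of which the boundary condition pins down the coefficient of $\bar c$ to $\pm1$, so that $d_Q(x,y)=|P|$.

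In this last case I would exhibit a genuinely short path: lift $P$ to a path $\widetilde{P}$ in $\widetilde{X}$ starting at $x$. Computing (tree by tree, as above) the cloud coordinate of its endpoint, it coincides with that of $y$, and its projection is $\pi(y)$, so $\widetilde{P}$ terminates at $y$; hence $d(x,y)\leq|\widetilde{P}|=|P|=d_Q(x,y)$, and combined with the inequality $d_Q(x,y)\leq d(x,y)$ already proved this yields $d(x,y)=d_Q(x,y)$. Assembling the pieces: if $d_Q(x,y)<\girth(X)$ then by the first case $\supp\bar c$ contains no cycle, so $d(x,y)=d_Q(x,y)<\girth(X)$; conversely, if $d(x,y)<\girth(X)$ then a $d$-geodesic cannot cross a lift of every edge of any cycle (that would force its length to be at least $\girth(X)$), so again $\supp\bar c$ contains no cycle and $d_Q(x,y)=d(x,y)<\girth(X)$; and under either hypothesis the equality $d_Q=d$ is exactly what the lifting argument provides (trivially when $x=y$).

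I expect the real obstacle to be the implication $d_Q(x,y)<\girth(X)\Rightarrow d(x,y)<\girth(X)$: this is the direction in which the two metrics could a priori differ drastically, because signed cancellations among the winding numbers $w_e$ can make $d_Q$ far smaller than $d$, long excursions that traverse a cycle a multiple of $m$ times contributing nothing to $d_Q$. What rescues the statement is that a value of $d_Q$ below the girth forces $\bar c$ to be supported on a forest, hence on a simple path of $X$; the bookkeeping showing this support is honestly a path with $\pm1$ coefficients, and the verification that its lift closes up at $y$ rather than at another point of the fibre, is the technical heart of the argument.
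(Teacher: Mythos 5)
Your proof is correct and follows essentially the same route as the paper: your chain $\bar c$ is the paper's element $\sum_e (|e\cap p|-|e^{-1}\cap p|)\delta_e$ of $\mathbb{Z}_m E(X)$, your degree computation is its boundary operator $\partial_m$, your observation that a cycle in $\supp\bar c$ forces both $d_Q$ and $d$ to be at least $\girth(X)$ is the paper's ``loop'' case, and your lifting of the simple path $P$ is exactly the paper's Lemma on lifts of $m$-congruent paths. Your packaging --- a single dichotomy on whether $\supp\bar c$ contains a cycle, together with a direct construction of a short lift rather than a contradiction to minimality of the projected geodesic --- is a clean reorganization of the same argument rather than a genuinely different one.
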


\begin{proof}
First assume that $d(x,y)<\girth(X)$. We have seen that $d_Q\leq d$ and so we have that $d_Q(x,y)< \girth(X)$. We now prove that when $d(x,y)<\girth(X)$, we have $d_Q(x,y)=d(x,y)$.

Projecting a geodesic $\geod[x,y]$ for the metric $d$ onto $X$, we see that there can be no repeated edges in this path since $d(x,y)<\girth(X)$. We now make the remark that in $\oplus^r \mathbb{Z}_m$, any path without multiple occurrences of edges labelled by the same generator must be a geodesic. Hence, for any choice of maximal tree $T$, the path between $C^T_x$ and $C^T_y$ induced in the corresponding $\oplus^r \mathbb{Z}_m$ by the $d$-geodesic must be a $d_T$-geodesic. We thus have (using the fact that a lift of either $e$ or $e^{-1}$ appears in the $d$-geodesic at most once):
\begin{align*}
d_Q(x,y)&=\frac{1}{N} \sum_{T} d_T(C^T_x,C^T_y)\\
&=\frac{1}{N} \sum_{T} \sum_{e \notin T} |\pi^{-1}(e)\cap \geod[x,y]|+|\pi^{-1}(e^{-1})\cap \geod[x,y]|\\
&=\sum_{e\in E(X)} \sum_{T:e \notin T} \frac{1}{N} |\pi^{-1}(e)\cap \geod[x,y]|+|\pi^{-1}(e^{-1})\cap \geod[x,y]|\\
&=\sum_{e\in E(X)} |\pi^{-1}(e)\cap \geod[x,y]|+|\pi^{-1}(e^{-1})\cap \geod[x,y]|\\
&=|\geod[x,y]|=d(x,y).
\end{align*}

Now it remains to show that $d_Q(x,y)<\girth(X)$ implies $d(x,y)<\girth(X)$. Assume that $d_Q(x,y)<\girth(X)$ and consider the projection $p[\pi(x),\pi(y)]$ of a $d$-geodesic onto $X$. Note that this is a shortest path in $X$ which lifts to a $d$-geodesic in $\widetilde{X}$ (i.e. no shorter path in $X$ can be lifted to a path in $\widetilde{X}$ between $x$ and $y$), and that this path does not contain backtracks.

If there are no repeated edges in $p[\pi(x),\pi(y)]$, the path between $C^T_x$ and $C^T_y$ in $\oplus^r \mathbb{Z}_m$ induced by the $d$-geodesic is necessarily a geodesic. Hence if $d(x,y)\geq \girth(X)$, we have
\begin{align*}
d_Q(x,y)&=\frac{1}{N} \sum_{T} d_T(C^T_x,C^T_y)\\
&=\frac{1}{N} \sum_{T} \sum_{e \notin T} |\pi^{-1}(e)\cap \geod[x,y]|+|\pi^{-1}(e^{-1})\cap \geod[x,y]|\\
&=\sum_{e\in E(X)} \sum_{T:e \notin T} \frac{1}{N} |\pi^{-1}(e)\cap \geod[x,y]|+|\pi^{-1}(e^{-1})\cap \geod[x,y]|\\
&=\sum_{e\in E(X)} |\pi^{-1}(e)\cap \geod[x,y]|+|\pi^{-1}(e^{-1})\cap \geod[x,y]|\\
&=|\geod[x,y]|=d(x,y)\\
&\geq \girth(X),
\end{align*} 
and so $d(x,y)\geq \girth(X)$ would imply that $d_Q(x,y)\geq \girth(X)$ which is a contradiction. 

Now let us consider what happens when the path $p[\pi(x),\pi(y)]$ contains repeated edges.
The arguments used are of a \emph{modular graph theory} flavour. 
For simplicity, we will use the following definitions and prove a simple result before continuing with the proof of Proposition \ref{Compare}. Recall that $m$ is always assumed to be $\geq 3$.

\begin{definition}
Given a path $p[a,b]$ from $a\in V(X)$ to $b\in V(X)$ in a graph $X$, write $|e\cap p[a,b]|$ for the number of times $p[a,b]$ traverses $e$ in the positive direction and $|e^{-1}\cap p[a,b]|$ for the number of times $p[a,b]$ traverses $e$ in the opposite direction.
We will call an edge $e$ on this path $m$-\emph{repeated} if 
$$0\neq |e\cap p[a,b]|-|e^{-1}\cap p[a,b]|\equiv 0\mod{m}.$$
We say that two paths $p_1[a,b]$ and $p_2[a,b]$ from $a$ to $b$ are $m$-\emph{congruent} if for all $e\in E(X)$,
$$|e\cap p_1[a,b]|-|e^{-1}\cap p_1[a,b]|\equiv |e\cap p_2[a,b]|-|e^{-1}\cap p_2[a,b]| \mod{m}.$$
A shorter path which is $m$-congruent to a path $p[a,b]$ will be called an $m$-\emph{shortcut} for $p[a,b]$. 
\end{definition}

\begin{Lemma}\label{CongLifts}
Given two paths $p_1[a,b]$ and $p_2[a,b]$ from $a$ to $b$ in $X$ which are $m$-congruent, take the lifts of these paths in the $\mathbb{Z}_m$-homology cover $\widetilde{X}$ of $X$, both starting at a point $x\in \pi^{-1}(a)$. Then both of these lifts end at the same point $y\in \pi^{-1}(b)$ of $\widetilde{X}$.
\end{Lemma}

\begin{proof}
Pick some maximal spanning tree $T$ in $X$, and consider the clouds in $\widetilde{X}$ corresponding to $T$. Taking lifts of both of the paths starting at $x\in \pi^{-1}(a)$, the condition $$|e\cap p_1[a,b]|-|e^{-1}\cap p_1[a,b]|\equiv |e\cap p_2[a,b]|-|e^{-1}\cap p_2[a,b]| \mod{m}$$ 
for the edges not in $T$ implies that both of the lifts terminate in the same cloud, since the cloud depends only on the number of times the edges not in $T$ are traversed modulo $m$. This now uniquely determines the point $y\in \pi^{-1}(b)$ and so we are done.
\end{proof}

We now continue with the proof of Proposition \ref{Compare}. Recall that we are assuming that $d_Q(x,y)< \girth(X)$, and aiming to show that this implies $d(x,y)< \girth(X)$. We have proved this in the case that the path $p[\pi(x),\pi(y)]$ has no repeated edges. Observe that repeated edges in $p[\pi(x),\pi(y)]$ necessarily imply that $d(x,y)\geq \girth(X)$ and so it remains to prove that the assumptions of repeated edges in $p[\pi(x),\pi(y)]$ and $d_Q(x,y)< \girth(X)$ lead to a contradiction.

Consider the sum 
$$ \sum_{e\in E(X)} |e\cap p[\pi(x),\pi(y)]|-|e^{-1}\cap p[\pi(x),\pi(y)]| \delta_e \in \mathbb{Z}_m E(X).$$
Note that if we remove all $m$-repeated edges from the path $p[\pi(x),\pi(y)]$, we still obtain the same element of $\mathbb{Z}_m E(X)$. 
For each edge $e$, let $v_e$ denote the initial vertex and let $w_e$ denote the terminal vertex.
We can now apply the \emph{boundary operator modulo} $m$, i.e. $\partial_m: \mathbb{Z}_m E(X) \longrightarrow \mathbb{Z}_m V(X)$ given by 
$$\partial_m(\sum_{e\in E(X)} \alpha_e \delta_e)= \sum_{e\in E(X)}\alpha_e (\delta_{w_e}-\delta_{v_e}) = \sum_{v\in V(X)} \beta_v \delta_v.$$
Since we started with a path $p[\pi(x),\pi(y)]$, we have that 
$$\partial_m( \sum_{e\in E(X)} |e\cap p[\pi(x),\pi(y)]|-|e^{-1}\cap p[\pi(x),\pi(y)]| \delta_e)=\delta_{\pi(y)}-\delta_{\pi(x)}.$$

Remove the $m$-repeated edges from $p[\pi(x),\pi(y)]$. This yields the same element of $\mathbb{Z}_m E(X)$, and so when we apply the boundary operator $\partial_m$, we still get $\delta_{\pi(y)}-\delta_{\pi(x)}$. 

Assume first that $\pi(x)\neq \pi(y)$.
Considerations of the boundary operator above imply that $p[\pi(x),\pi(y)]$ with the $m$-repeated edges removed is the union of a path $p'[\pi(x),\pi(y)]$ from $\pi(x)$ to $\pi(y)$ and possibly some loops which are disjoint from this path.

If the graph of the path $p[\pi(x),\pi(y)]$ with the $m$-repeated edges removed is connected, i.e. there are no loops disjoint from $p'[\pi(x),\pi(y)]$, then $p[\pi(x),\pi(y)]$ once the $m$-repeated edges have been removed is simply the path $p'[\pi(x),\pi(y)]$, which is $m$-congruent to the path $p[\pi(x),\pi(y)]$. 

If $p'[\pi(x),\pi(y)]$ does not contain loops, it is also an $m$-shortcut for $p[\pi(x),\pi(y)]$, since we obtained it by removing $m$-repeated edges. 
Now Lemma \ref{CongLifts} tells us that two $m$-congruent paths can be lifted to the same path in $\widetilde{X}$, and so the existence of an $m$-shortcut for the path $p[\pi(x),\pi(y)]$ is a contradiction to this path being the shortest path in $X$ which can be lifted to a path in $\widetilde{X}$ between $x$ and $y$. 
 
If $p'[\pi(x),\pi(y)]$ contains loops, then each of the edges on such a loop must contribute at least 1 to 
$$\sum_{T: a \notin T}\frac{1}{N}\min\{\phi(a,x,y), m-\phi(a,x,y)\}$$
in the total sum
$$d_Q(x,y)= \sum_{a\in E(X)}\sum_{T: a \notin T}\frac{1}{N}\min\{\phi(a,x,y), m-\phi(a,x,y)\},$$
whence $d_Q(x,y)$ is at least the length of this loop, which contradicts our assumption that $d_Q(x,y)< \girth(X)$.

If the graph of the path $p[\pi(x),\pi(y)]$ has become disconnected upon removal of the $m$-repeated edges, this means that it is the disjoint union of a path and a non-zero number of loops. Thus, in our original path $p[\pi(x),\pi(y)]$, we have found at least one loop of edges which are traversed non-zero modulo $m$ times and so the same conclusion holds, as above.

Assume now that $\pi(x)= \pi(y)$. In this case, the graph of the path $p[\pi(x),\pi(y)]$ with the $m$-repeated edges removed must be trivial, or must be a union of a non-zero number of loops. If it is trivial, then the trivial path consisting of the vertex $\pi(x)$ is an $m$-shortcut for the path $p[\pi(x),\pi(y)]$ which is a contradiction, as above. If it is a union of a non-zero number of loops, we again deduce the contradiction $d_Q(x,y)\geq \girth(X)$.

This completes the proof of the implication $$d_Q(x,y)<\girth(X) \Rightarrow d(x,y)<\girth(X)$$ and so Proposition \ref{Compare} is proved. 

\end{proof}

We will rely on a result of Nowak \cite{Now} which tells us that the space $\sqcup_{i\in \mathbb{N}} \oplus^i \mathbb{Z}_m$ with the metric coming from the standard generating sets of each $\oplus^i \mathbb{Z}_m$ coarsely embeds into Hilbert space. More precisely, given a finite group $F$ with a fixed generating set $S$, metrize the disjoint union $\sqcup_{i\in \mathbb{N}} \oplus^i F$ by setting the metric on each $\oplus^i F$ to be the standard metric induced by $S$, and specifying that the distance between the $\oplus^i F$ must tend to infinity.

\begin{thm}[\cite{Now}, Theorem 5.1 (2)]
Given any finite group $F$, the (locally finite) metric space $\sqcup_{i\in \mathbb{N}} \oplus^i F$ admits a bi-Lipschitz embedding into $\ell^1$. 
\end{thm}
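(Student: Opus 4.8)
The plan is to embed $\sqcup_i \oplus^i F$ into $\ell^1$ by treating each finite group $\oplus^i F$ independently, and then observing that the embeddings are \emph{simultaneously} bi-Lipschitz with constants not depending on $i$. Since the disjoint union metric forces points in different components arbitrarily far apart, a uniformly bi-Lipschitz family of embeddings of the components (into $\ell^1$, say with a fixed target) glues to a bi-Lipschitz embedding of the whole disjoint union after translating the images of the components far apart in $\ell^1$; this gluing step is routine, so the real content is the uniform embedding of $\oplus^i F$.

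First I would fix the finite generating set $S$ of $F$ and note that the word metric on $\oplus^i F$ with respect to the "block" generating set $\bigcup_{j=1}^i (\{1\}\times\cdots\times S\times\cdots\times\{1\})$ splits as a sum over coordinates: for $x=(x_1,\dots,x_i)$ and $y=(y_1,\dots,y_i)$ in $\oplus^i F$ one has $d(x,y)=\sum_{j=1}^i d_F(x_j,y_j)$, where $d_F$ is the word metric on the single factor $F$. This is the key structural fact: the metric on $\oplus^i F$ is exactly the $\ell^1$-sum of $i$ copies of the metric space $(F,d_F)$. Now $(F,d_F)$ is a finite metric space, hence it embeds isometrically into $\ell^1$ of some finite dimension $N$ (in fact into $\ell^1_{|F|}$ via the Fréchet-type embedding $g\mapsto (d_F(g,h))_{h\in F}$, which is bi-Lipschitz; or one can cite that every finite metric space $\ell^1$-embeds with distortion related to its size — but here we do not even need distortion, since an isometric $\ell^1$-embedding of a finite metric space always exists if we are allowed to use cut metrics, and in any case a fixed finite distortion suffices). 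Call this embedding $f:F\to\ell^1_N$ with $c_1 d_F(g,h)\le \|f(g)-f(h)\|_1\le c_2 d_F(g,h)$.

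Next I would build the embedding of $\oplus^i F$ into $\ell^1_{iN}=\bigoplus_{j=1}^i \ell^1_N$ by setting $F_i(x_1,\dots,x_i)=(f(x_1),\dots,f(x_i))$. Because the $\ell^1$ norm of a direct sum is the sum of the norms and the metric on $\oplus^i F$ is the sum of the coordinate metrics, we get
\begin{equation*}
\|F_i(x)-F_i(y)\|_1=\sum_{j=1}^i \|f(x_j)-f(y_j)\|_1,
\end{equation*}
and comparing this termwise with $\sum_j d_F(x_j,y_j)=d(x,y)$ yields $c_1 d(x,y)\le \|F_i(x)-F_i(y)\|_1\le c_2 d(x,y)$ with the \emph{same} constants $c_1,c_2$ for every $i$. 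Finally, to assemble the disjoint union, choose for each $i$ a point $p_i\in\ell^1$ (living in a fresh set of coordinates, or just far out along one axis) with $\|p_i-p_{i'}\|_1$ growing fast enough to dominate the prescribed inter-component distances and the (bounded, since $\diam(\oplus^i F)\le c_2 i \cdot \diam(F)$ — actually unbounded in $i$, so one must choose the $p_i$ with gaps growing faster than $i$) diameters; define the global map to be $x\mapsto F_i(x)+p_i$ for $x$ in the $i$-th component. Checking that this is bi-Lipschitz on all of $\sqcup_i \oplus^i F$ is then a matter of the two cases (same component: handled above; different components: the $p_i$-gap dominates).

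The main obstacle is the bookkeeping in the final gluing step: one must be careful that the distances between distinct components in the disjoint-union metric can be taken as large as we like (this is part of the definition, so we are free to assume it), and that the translates $p_i$ are spread out enough to be compatible with these while still keeping the map Lipschitz from above — this forces the $p_i$-gaps to grow at least linearly in $i$ to cope with $\diam(\oplus^i F)$, which is fine. No genuine analytic difficulty arises; the whole point is the coordinatewise splitting of the word metric on $\oplus^i F$, which reduces everything to the trivial fact that a single finite metric space embeds into finite-dimensional $\ell^1$.
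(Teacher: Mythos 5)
This statement is quoted from Nowak's paper and is not proved in the present paper, so there is no internal proof to compare against; your argument is, however, essentially Nowak's own: the word metric on $\oplus^i F$ for the block generating set is the $\ell^1$-sum of $i$ copies of $(F,d_F)$, a single finite group embeds bi-Lipschitzly into finite-dimensional $\ell^1$, the coordinatewise direct sum then has distortion independent of $i$, and the components are translated apart. Two small caveats: your parenthetical claim that every finite metric space embeds \emph{isometrically} into $\ell^1$ is false in general (isometric $\ell^1$-embeddability is equivalent to the metric lying in the cut cone), but this is harmless since the Fr\'echet-type map $g\mapsto (d_F(g,h))_{h\in F}$ is bi-Lipschitz with constants $1$ and $|F|$, which is all you use; and since bi-Lipschitz equivalence is not preserved under changing the (coarsely equivalent) choices of inter-component distances, the gluing step implicitly fixes one such choice (e.g.\ components placed along a ray), which is the convention under which the theorem should be read.
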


\begin{Prop}\label{IndepEmb}
Let $X$ be a finite $2$-connected graph such that the number $N_e=N$ of maximal spanning trees not containing a given edge $e$ is independent of the edge chosen. 
Its $\mathbb{Z}_m$-homology cover $\widetilde{X}$ admits a coarse embedding into Hilbert space with respect to the metric $d_Q$ such that the functions $\rho_{\pm}$ depend only on $m$, and not on $X$. 
\end{Prop}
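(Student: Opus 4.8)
The plan is to first realise $(\widetilde X,d_Q)$ bi-Lipschitzly inside $\ell^1$ with distortion depending only on $m$, and then transfer this to a coarse embedding into Hilbert space using the classical fact that the $\ell^1$-metric is of negative type. Since the target of the final embedding will only need to be an abstract (possibly finite-dimensional) Hilbert space, the dimension is allowed to depend on $X$; only the distortion/control functions must be uniform.

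\emph{Step 1: an isometric embedding of $(\widetilde X, d_Q)$ into an $\ell^1$-sum of cubes.} Recall the formula $d_Q(x,y)=\tfrac1N\sum_T d_T(C^T_x,C^T_y)$ established above, in which $T$ runs over the finitely many maximal spanning trees of $X$, $r$ is the first Betti number of $X$, and $d_T$ is the standard word metric on $\oplus^r\mathbb Z_m$. Writing $\psi_T\colon\widetilde X\to\oplus^r\mathbb Z_m$ for the map sending a point to (the element of $\oplus^r\mathbb Z_m$ corresponding to) its $T$-cloud, the map $x\mapsto(\psi_T(x))_T$ is then an \emph{isometric} embedding of $(\widetilde X,d_Q)$ into the $\ell^1$-direct sum, over all $T$, of the metric spaces $(\oplus^r\mathbb Z_m,\tfrac1N d_T)$.

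\emph{Step 2: from $\oplus^r\mathbb Z_m$ into $\ell^1$, uniformly.} Apply the theorem of Nowak quoted above with $F=\mathbb Z_m$: it provides a bi-Lipschitz embedding of $\sqcup_i\oplus^i\mathbb Z_m$ into $\ell^1$, and restricting to a single component gives a bi-Lipschitz embedding of $(\oplus^r\mathbb Z_m,d_T)$ into $\ell^1$ whose distortion $D=D(m)$ does not depend on $r$, hence not on $X$. Taking the $\ell^1$-direct sum of these embeddings over $T$, each rescaled by $1/N$, and composing with Step 1 yields a bi-Lipschitz embedding of $(\widetilde X,d_Q)$ into $\ell^1$; crucially its distortion is still at most $D(m)$, independently of the number of spanning trees and of $N$. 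After rescaling we obtain $G\colon\widetilde X\to\ell^1$ with $d_Q(x,y)\le\|G(x)-G(y)\|_1\le D(m)\,d_Q(x,y)$ for all $x,y$.

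\emph{Step 3: from $\ell^1$ to Hilbert space, and conclusion.} Recall the classical fact that the $\ell^1$-metric is of negative type, so that $(\ell^1,\|\cdot-\cdot\|_1^{1/2})$ embeds isometrically into a Hilbert space (this is the same mechanism underlying the wall-space embeddings used in \cite{AGS}). Composing this embedding with $G$ produces $\Phi\colon\widetilde X\to\ell^2$ with $\sqrt{d_Q(x,y)}\le\|\Phi(x)-\Phi(y)\|_2\le\sqrt{D(m)}\,\sqrt{d_Q(x,y)}$, i.e.\ a coarse embedding of $(\widetilde X,d_Q)$ into Hilbert space with $\rho_-(t)=\sqrt t$ and $\rho_+(t)=\sqrt{D(m)\,t}$, functions depending only on $m$. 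The main obstacle, such as it is, lies in the two uniformity points: that Nowak's $\ell^1$-embedding restricted to one component has distortion independent of $r$ (this is precisely what makes $\rho_\pm$ uniform over all admissible $X$), and that the step $\ell^1\to\ell^2$ replaces the metric by its square root, so one genuinely obtains a coarse, not bi-Lipschitz, embedding. The gluing of the per-tree embeddings into a single $\ell^1$-embedding is routine and costs nothing in the distortion.
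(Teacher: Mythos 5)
Your proposal is correct and follows essentially the same route as the paper: the composition of your Steps 1 and 2 is exactly the paper's map $x \mapsto \frac{1}{N}\oplus_T \phi(C^T_x)$ built from Nowak's embedding, yielding a bi-Lipschitz embedding of $(\widetilde{X},d_Q)$ into $\ell^1$ with constant depending only on $m$. Your Step 3 merely spells out the ``$\ell^1$ coarsely embeds into $\ell^2$'' step that the paper cites without detail, via the negative-type/square-root argument.
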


\begin{proof}
Let $d_m$ be the metric on $\sqcup_{i\in \mathbb{N}} \oplus^i \mathbb{Z}_m$ which on each component is the Cayley graph metric coming from taking one generator for each factor of $\oplus^i \mathbb{Z}_m$, and such that distances between components tend to infinity. 
Nowak's theorem tells us that there is a bi-Lipschitz embedding $$\phi:\sqcup_{i\in \mathbb{N}} \oplus^i \mathbb{Z}_m \longrightarrow \ell^1,$$ i.e. there exists a $C>0$ such that
\begin{equation*}
\frac{1}{C} d_m(a,b)\leq \|\phi(a)-\phi(b)\|_1\leq C d_m(a,b)
\end{equation*}
for all $a,b\in \sqcup_{i\in \mathbb{N}} \oplus^i \mathbb{Z}_m$. 
Let $r$ denote the $m$-rank of $\pi_1(X)$.
Recall that for a point $x$ in the cover $\widetilde{X}$, $C^T_x$ denotes the cloud (corresponding to some point in $\oplus^{r} \mathbb{Z}_m$) containing $x$ with respect to the maximal tree $T$. 
Define $\psi:\widetilde{X} \longrightarrow \ell^1$ by $$x \longmapsto \frac{1}{N} \oplus_{T} \phi(C^T_x).$$
This embedding satisfies
$$\|\psi(x)-\psi(y)\|_1=\|\frac{1}{N}\oplus_{T} \phi(C^T_x)-\frac{1}{N}\oplus_{T} \phi(C^T_y)\|_1=\frac{1}{N}\sum_T \|\phi(C^T_x)-\phi(C^T_y)\|_1$$
and thus we have 
\begin{align*}
\frac{1}{C} d_Q(x,y)&=\frac{1}{C} \frac{1}{N}\sum_T d_T(C^T_x,C^T_y)\\
&\leq \frac{1}{N}\sum_T \|\phi(C^T_x)-\phi(C^T_y)\|_1\\
&=\|\psi(x)-\psi(y)\|_1\\
&\leq \frac{1}{N} C\sum_T d_T(C^T_x,C^T_y)\\
&=C d_Q(x,y).
\end{align*}
Note that $C$ only depends on $m$. Since $\ell^1$ coarsely embeds into $\ell^2$, the proof is complete.

\end{proof}

\section*{Box spaces}

We can now add all of the ingredients of the previous section to get the following general result, which will in particular apply to box spaces of free groups.

\begin{Thm}\label{Main}
Let $\{X_n\}$ be a sequence of $2$-connected finite graphs such that for each $n$, the number of maximal spanning trees in $X_n$ not containing a given edge does not depend on the edge. Given $m\in \mathbb{N}$, let $\{\widetilde{X}_n\}$ be the sequence of $\mathbb{Z}_m$-homology covers of the $X_n$. 
If $\girth(X_n)\rightarrow \infty$ as $n\rightarrow \infty$, then the coarse disjoint union $\sqcup_n \widetilde{X}_n$ coarsely embeds into Hilbert space. 
\end{Thm}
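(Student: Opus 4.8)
The plan is to combine the per-graph embedding of Proposition \ref{IndepEmb} with the metric comparison of Proposition \ref{Compare}, exploiting the fact that the distortion functions in Proposition \ref{IndepEmb} depend only on $m$. First I would invoke Proposition \ref{IndepEmb} to obtain, for each $n$, a map $\psi_n : \widetilde{X}_n \to \ell^2$ which is a coarse embedding with respect to the pseudometric $d_Q^{(n)}$ on $\widetilde{X}_n$, with control functions $\rho_\pm$ that are uniform in $n$ (indeed, $\ell^1$-bi-Lipschitz with constant depending only on $m$, then composed with the coarse embedding $\ell^1 \hookrightarrow \ell^2$). By rescaling in the target one may also arrange $\rho_- \to \infty$ while keeping things uniform; the key point is that these functions do not depend on the individual graph $X_n$.

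Next I would transfer this to the actual graph metric $d^{(n)}$ using Proposition \ref{Compare}: for $x,y \in \widetilde{X}_n$ we have $d_Q^{(n)}(x,y) < \girth(X_n) \iff d^{(n)}(x,y) < \girth(X_n)$, and when these hold, $d_Q^{(n)}(x,y) = d^{(n)}(x,y)$. Since $\girth(X_n) \to \infty$, for any fixed scale $R$ and all large enough $n$, any pair of points at $d^{(n)}$-distance $\leq R$ has $d_Q^{(n)} = d^{(n)}$, and conversely a pair with $d_Q^{(n)} \leq R$ has $d^{(n)} = d_Q^{(n)}$; more generally, when $d^{(n)}(x,y) \geq \girth(X_n)$ we automatically get $d_Q^{(n)}(x,y) \geq \girth(X_n)$ as well, so $d_Q^{(n)}$ is always a genuine lower bound that matches $d^{(n)}$ below the girth. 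This gives the one fact needed, namely $d_Q^{(n)}(x,y) = d^{(n)}(x,y)$ whenever either is less than $\girth(X_n)$, and $d_Q^{(n)}(x,y) \geq \girth(X_n)$ otherwise.

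Then I would assemble the global map on the coarse disjoint union $\sqcup_n \widetilde{X}_n$. Fix basepoints and define $\Psi$ on $\widetilde{X}_n$ to be $\psi_n$ translated so that the images of different components are placed far apart in $\ell^2$ (using orthogonal summands, say $\Psi|_{\widetilde{X}_n} = \psi_n \oplus t_n \xi_n$ for suitable scalars $t_n$ and orthonormal $\xi_n$, with $t_n$ growing fast enough to dominate the bounded-within-a-component behaviour and to realize the large inter-component distances). For $x,y$ in the same component $\widetilde{X}_n$: if $d^{(n)}(x,y) < \girth(X_n)$ then $\|\Psi(x)-\Psi(y)\| = \|\psi_n(x)-\psi_n(y)\|$ is squeezed between $\rho_-(d_Q^{(n)}(x,y)) = \rho_-(d^{(n)}(x,y))$ and $\rho_+(d^{(n)}(x,y))$; if $d^{(n)}(x,y) \geq \girth(X_n)$ then $d_Q^{(n)}(x,y) \geq \girth(X_n)$ too, so the lower bound $\|\psi_n(x)-\psi_n(y)\| \geq \rho_-(d_Q^{(n)}(x,y)) \geq \rho_-(\girth(X_n)) \to \infty$ is as large as we like, while for the upper bound one uses $d_Q^{(n)} \leq d^{(n)}$ together with the $\ell^1$-side Lipschitz bound giving $\|\psi_n(x)-\psi_n(y)\| \leq C\,d^{(n)}(x,y)$ directly. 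For $x,y$ in different components, the summand term $\|t_n\xi_n - t_{n'}\xi_{n'}\|$ handles both bounds by the usual choice of $t_n$. Patching the per-scale estimates into honest non-decreasing functions $\rho_\pm$ with $\rho_\pm(t) \to \infty$ is then routine.

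The main obstacle is the bookkeeping around pairs with $d^{(n)}(x,y)$ large (comparable to or exceeding $\girth(X_n)$): there $d_Q^{(n)}$ may be much smaller than $d^{(n)}$, so $d_Q^{(n)}$ alone does not give an upper bound of the right form on $\|\psi_n(x)-\psi_n(y)\|$ — one must instead go back to the raw $\ell^1$-Lipschitz estimate $\|\psi_n(x)-\psi_n(y)\|_1 \leq C\,d^{(n)}(x,y)$ (which holds because $d_Q^{(n)} \leq d^{(n)}$ pointwise), and the crucial leverage for the lower bound is precisely that, past the girth, $d_Q^{(n)}$ is still at least $\girth(X_n) \to \infty$, so uniformity of $\rho_-$ in $n$ saves the day. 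Everything else is standard coarse-disjoint-union assembly.
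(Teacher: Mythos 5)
Your proposal is correct and takes essentially the same approach as the paper: a per-component embedding from Proposition \ref{IndepEmb} whose control functions are uniform in $n$, combined with the girth-scale comparison of $d$ and $d_Q$ from Proposition \ref{Compare} and the standard coarse-disjoint-union assembly. The only cosmetic difference is that the paper packages the metric comparison as a separate statement (Proposition \ref{CE}: the identity map between $(\sqcup_n\widetilde{X}_n,d)$ and $(\sqcup_n\widetilde{X}_n,d_Q)$ is a coarse equivalence, handling the finitely many small-girth components by a single bounded constant) and then composes, whereas you fold the same estimates directly into the construction of the global map.
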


Let $N_n$ be the number of maximal spanning trees of $X_n$ not containing a given edge. 
Using the method of the previous section, one can define a metric $d_{Q_n}$ on each $\widetilde{X}_n$ by $d_{Q_n}(x,y)=\frac{1}{N_n} \sum_{T} d_T(C^T_x,C^T_y)$, where the sum is taken over all maximal spanning trees of $X_n$.
We will write $d_Q$ to mean the coarse disjoint union metric which is $d_{Q_n}$ on each component $\widetilde{X}_n$. Let $d$ denote the coarse disjoint union metric on $\sqcup_n \widetilde{X}_n$ which restricts to the natural graph metric on each component. 

We will first need the following proposition, which is proved exactly as Proposition 4.5 of \cite{AGS}, using the comparison of metrics on the scale of the girth that we proved in the previous section. 

\begin{Prop}\label{CE}
The identity map between the metric spaces formed by taking $\sqcup_n \widetilde{X}_n$ with the two different metrics $d$ and $d_Q$ is a coarse equivalence, i.e. the identity map and its inverse are both coarse embeddings. 
\end{Prop}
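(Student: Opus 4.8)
The plan is to show that the identity map $\mathrm{id}:(\sqcup_n \widetilde{X}_n, d) \to (\sqcup_n \widetilde{X}_n, d_Q)$ and its inverse are both coarse embeddings, i.e.\ to produce non-decreasing functions $\rho_\pm$ with $\rho_\pm(t)\to\infty$ such that $\rho_-(d(x,y)) \le d_Q(x,y) \le \rho_+(d(x,y))$ for all $x,y$. The upper bound is already in hand: we showed in the previous section that $d_Q \le d$ pointwise on each component, and this inequality is preserved on the coarse disjoint union (after adjusting the additive constants governing inter-component distances, which only affects things by a bounded amount), so one may take $\rho_+ = \mathrm{id}$. So the whole content is the lower bound: I need a non-decreasing $\rho_-$ tending to infinity with $\rho_-(d(x,y)) \le d_Q(x,y)$.

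First I would handle pairs $x,y$ lying in the same component $\widetilde{X}_n$. Here Proposition~\ref{Compare} applies: for each $n$ it gives the equivalence $d_Q(x,y) < \girth(X_n) \iff d(x,y) < \girth(X_n)$, and when these hold, $d_Q(x,y) = d(x,y)$. Since $\girth(X_n)\to\infty$, define $g(t) := \min\{\girth(X_n) : d_Q \text{ or } d \text{ ever reaches } t \text{ on } \widetilde{X}_n\}$ — more cleanly, for a threshold $R$ let $n_0(R)$ be large enough that $\girth(X_n) > R$ for all $n \ge n_0(R)$. For $n \ge n_0(R)$: if $d(x,y) \le R < \girth(X_n)$ then by Proposition~\ref{Compare} $d_Q(x,y) = d(x,y)$, so $d_Q(x,y) \ge$ (anything $\le d(x,y)$). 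For the finitely many remaining components $n < n_0(R)$, each $\widetilde{X}_n$ is a fixed finite graph, so $d_Q$ restricted to it is a genuine metric (the fact that $d_Q$ separates points follows, as the excerpt notes, from Proposition~\ref{Compare}) and takes finitely many values; in particular $\inf\{d_Q(x,y) : d(x,y) \ge 1,\ x,y \in \widetilde{X}_n,\ n < n_0(R)\} =: c(R) > 0$. Combining these, one builds $\rho_-$ as a step function: roughly $\rho_-(t) = \min(t, \text{something accounting for the finitely many small components})$, arranged to be non-decreasing and $\to\infty$. The standard way (as in Proposition~4.5 of \cite{AGS}) is: for each $R$ pick $\delta_R>0$ so that $d(x,y)\le R \Rightarrow d_Q(x,y)\ge \delta_R$ everywhere — valid because for $n\ge n_0(R)$ we even get equality, and for the finitely many $n<n_0(R)$ we use positivity of $d_Q$ on each finite graph — then assemble the $\delta_R$ into $\rho_-$.

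I also need to deal with pairs $x,y$ in \emph{distinct} components $\widetilde{X}_n, \widetilde{X}_{n'}$. By the definition of the coarse disjoint union, $d(x,y)$ is then at least (and comparable to) the chosen inter-component separation, which we are free to take as large as we like and tending to infinity with $\max(n,n')$; likewise $d_Q(x,y)$ between distinct components can be defined with the same growing separation. So on inter-component pairs both metrics are large and comparable essentially by construction, and this case imposes no real constraint on $\rho_\pm$ beyond bookkeeping.

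The main obstacle — really the only non-formal point — is the handling of the finitely many small-girth components for a given scale $R$: one must be sure that $d_Q$ is an honest metric there (not merely a pseudometric), so that $d(x,y)\ge 1$ forces $d_Q(x,y)$ bounded below by a positive constant on that finite set. This is exactly what Proposition~\ref{Compare} delivers (it pins down $d_Q = d$ on the girth scale, hence $d_Q(x,y)=0 \Rightarrow d(x,y)=0 \Rightarrow x=y$), so once that proposition is invoked the argument is just the routine assembly of a proper function $\rho_-$ from the thresholds $\delta_R$, identical in structure to \cite[Proposition~4.5]{AGS}.
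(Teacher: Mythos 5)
Your overall strategy is the paper's: use $d_Q\leq d$ for one direction, and for the other combine Proposition \ref{Compare} on the large-girth components with the finiteness of the remaining components and the growing inter-component separation. But the execution of the crucial direction is reversed, and as written the assembly does not produce a proper $\rho_-$. What must be shown is: for every $R$ there is an $S$ such that $d_Q(x,y)\leq R$ implies $d(x,y)\leq S$. On a component with $\girth(X_n)>R$ this follows from the implication $d_Q(x,y)<\girth(X_n)\Rightarrow d(x,y)<\girth(X_n)$ (and then $d=d_Q\leq R$), i.e.\ from the half of Proposition \ref{Compare} that starts from a bound on $d_Q$. You instead condition on $d(x,y)\leq R$ and extract constants $\delta_R>0$ with $d(x,y)\leq R\Rightarrow d_Q(x,y)\geq\delta_R$ for $x\neq y$. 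Those $\delta_R$ are infima over sets of pairs that grow with $R$, so they are non-increasing in $R$ and bounded above (by $d_Q$ of any adjacent pair, hence by $1$); they control $\rho_-$ near $0$ but can never be assembled into a function tending to infinity. The fix is immediate --- invoke the other half of the equivalence in Proposition \ref{Compare}, exactly as the paper does --- but the step as you wrote it would fail.

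Relatedly, the point you single out as the main obstacle --- that $d_Q$ must be an honest metric, separating points, on the finitely many small-girth components --- is a non-issue. A coarse embedding need not be injective, and $\rho_-$ is allowed to vanish on a bounded initial segment; for the finitely many components with $\girth(X_n)\leq R$ one only needs that their union has finite $d$-diameter (automatic, since they are finitely many finite graphs), so that $S$ can be chosen to dominate it. Positivity of $d_Q$ on those components plays no role in the paper's argument, and needing it would be a sign that the quantifiers are pointing the wrong way.
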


\begin{proof}
It was proved in the previous section that $d_{Q_n}$ is always smaller than the natural graph metric on each $\widetilde{X}_n$ and so we need only prove that for each $R>0$ there is an $S>0$ such that $d_Q(x,y)\leq R$ implies $d(x,y)\leq S$ for all $x,y \in \sqcup_n \widetilde{X}_n$. 

Given $R>0$, take $M>0$ such that for all $n,m\geq M$, we have $\girth(X_n)> R$ and the distance $d_Q$ between components $\widetilde{X}_n$ and $\widetilde{X}_m$ is greater than $R$. Let $S:=\max\{R,d(x,y):x,y \in \sqcup_{n<M} \widetilde{X}_n\}$. 

Now if $x,y\in \sqcup_n \widetilde{X}_n$ with $d_Q(x,y)\leq R$, then either $x,y \in \sqcup_{n<M} \widetilde{X}_n$ whence $d(x,y)\leq S$ by the definition of $S$, or $x,y \in \widetilde{X}_n$ for $n\geq M$. For this $\widetilde{X}_n$, Proposition \ref{Compare} tells us that the restriction of the identity map to balls of radius $R<\girth(X_n)$ in $\widetilde{X}_n$ is an isometry. This means we have $d(x,y)=d_Q(x,y)\leq R\leq S$ and so the proof is complete. 
\end{proof}

\begin{proof}[Proof of Theorem \ref{Main}]
Proposition \ref{IndepEmb} in the previous section tells us that each $\widetilde{X}_n$, being a $\mathbb{Z}_m$-homology cover, admits a coarse embedding into Hilbert space with respect to the metric $d_Q$. Moreover, the embedding functions $\rho_{\pm}$ depend only on $m\in \mathbb{N}$ and not on $X_n$, and so the embedding is uniform over all $n$.
Hence, $\sqcup_n \widetilde{X}_n$ with the metric $d_Q$ coarsely embeds into Hilbert space. Now the theorem is proved since by Proposition \ref{CE} above the metric $d_Q$ is coarsely equivalent to $d$, the metric arising from the natural graph metrics on $\sqcup_n \widetilde{X}_n$.
\end{proof}

We now apply Theorem \ref{Main} to certain box spaces of free groups. Let $G$ be the free group on $n$ generators, and let $m\in \mathbb{N}$. Let $\{G_i\}$ denote the derived $m$-series of $G$. Note that this is a nested sequence of finite index characteristic subgroups of $G$, so it makes sense to talk about the box space $\Box_{\{G_i\}}G$. Each successive quotient $G/G_{i+1}$ is the $\mathbb{Z}_m$-homology cover of $G/G_{i}$. 

To apply the above, set $X_n$ to be $G/G_n$. We then have that $\sqcup_n \widetilde{X}_n$ is equal to $\sqcup_n G/G_{n+1}$ and so to show that the box space $\Box_{\{G_i\}}G$ is embeddable, we need to show that the assumptions of Theorem \ref{Main} are satisfied. It is clear that the graphs $G/G_n$ are 2-connected, and since $\sqcup_n G/G_{n+1}$ is a box space of the free group, we have that $\girth(G/G_n)\rightarrow \infty$ as $n\rightarrow \infty$. It remains to show that for each $n$, the number of maximal spanning trees in $G/G_n$ not containing a given edge does not depend on the edge chosen. This is clear, since permuting the generators of $G$ induces an isomorphism of $G/G_n$, and hence a graph isomorphism of the corresponding Cayley graph.

We have obtained the following.

\begin{Corollary}
Given a finitely generated free group $\mathbb{F}_n$ and $m\in \mathbb{N}$ with $m\geq 3$, the box space corresponding to the $m$-derived series of $\mathbb{F}_n$ coarsely embeds into Hilbert space.
\end{Corollary}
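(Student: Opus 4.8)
The plan is to deduce the Corollary directly from Theorem~\ref{Main} by exhibiting the box space of the derived $m$-series as a coarse disjoint union of $\mathbb{Z}_m$-homology covers. Write $G=\mathbb{F}_n$ and let $\{G_i\}$ be its derived $m$-series. As recalled in the introduction, each $G_i$ is a finite-index characteristic subgroup of $G$ and Levi's theorem gives $\bigcap_i G_i=1$, so $\Box_{\{G_i\}}G=\sqcup_i G/G_i$ is a genuine box space. The crucial first step is the identification of the successive quotients as homology covers. Fixing a free basis $S$ of $G$ and letting $X_i$ denote the Cayley graph of $G/G_i$ with respect to the image of $S$, the universal cover of $X_i$ is the tree $\mathrm{Cay}(G,S)$, on which the deck group acts by left translation and is therefore $G_i$; hence $\pi_1(X_i)\cong G_i$, free of finite rank $r_i$. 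Since $G_{i+1}=[G_i,G_i]G_i^m$ is precisely $[\pi_1(X_i),\pi_1(X_i)]\pi_1(X_i)^m$, with $\pi_1(X_i)/G_{i+1}\cong(\mathbb{Z}_m)^{r_i}$, and since $G_{i+1}$ is characteristic (hence normal) in $G$, the cover of $X_i$ corresponding to this subgroup is $\mathrm{Cay}(G,S)/G_{i+1}=\mathrm{Cay}(G/G_{i+1},S)=X_{i+1}$, compatibly with the covering projection. That is, $\widetilde{X}_i=X_{i+1}$ is exactly the $\mathbb{Z}_m$-homology cover of $X_i$ in the sense of the earlier sections.

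With this in hand I would set $X_n:=G/G_n$ and apply Theorem~\ref{Main} to the sequence $\{X_n\}$ (discarding, if necessary, the at most one trivial initial component so that each $X_n$ contains a cycle). Then $\sqcup_n\widetilde{X}_n=\sqcup_n G/G_{n+1}$, which differs from $\Box_{\{G_i\}}G$ only in finitely many components and is therefore coarsely equivalent to it. Two of the three hypotheses of Theorem~\ref{Main} are quick. First, $\girth(X_n)\to\infty$ is the defining girth property of box spaces of free groups: since the $G_n$ are nested with trivial intersection, for every $L$ all but finitely many $G_n$ meet the ball of radius $L$ in $G$ only in the identity, and the length of the shortest nontrivial reduced word lying in $G_n$ is exactly $\girth(G/G_n)$. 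Second, each $X_n$ is $2$-connected: the image of every generator has order at least $m\geq 3$ in $G/G_n$ (because $G/G_n$ surjects onto $G/G_2\cong(\mathbb{Z}_m)^n$, in which each generator has order $m$), so deleting an edge $\{g,gs\}$ still leaves the detour $g,\,gs^{-1},\,gs^{-2},\dots,gs^{-(k-1)}=gs$ around the remainder of the $s$-cycle.

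The remaining hypothesis---that for each $n$ the number $N_e$ of maximal spanning trees of $X_n$ avoiding a fixed edge $e$ is independent of $e$---I would establish by producing an edge-transitive group of graph automorphisms. Left translations by elements of $G/G_n$ act transitively on the edges of any fixed ``colour'' (those labelled by a fixed $s\in S$), while any permutation of $S$ is an automorphism of $G$ and hence, as $G_n$ is characteristic, descends to an automorphism of $G/G_n$ and a graph automorphism of $X_n$ permuting the colours; together these act transitively on $E(X_n)$, which forces $N_e$ to be constant. All hypotheses of Theorem~\ref{Main} being verified, it yields that $\sqcup_n\widetilde{X}_n=\sqcup_n G/G_{n+1}$ coarsely embeds into Hilbert space, and hence so does the coarsely equivalent space $\Box_{\{G_i\}}G$. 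I expect the only genuinely delicate point to be the homology-cover identification of the first paragraph---making precise that the abstract $\mathbb{Z}_m$-homology cover construction applied to $G/G_i$ returns $G/G_{i+1}$ with its Cayley graph structure and the right projection, rather than some other cover; this is a matter of unwinding the definition of $\pi_1$ of a Cayley graph against that of the derived $m$-series, and the remaining verifications are routine.
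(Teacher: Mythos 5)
Your proposal is correct and follows essentially the same route as the paper: set $X_n=G/G_n$, identify $G/G_{n+1}$ as the $\mathbb{Z}_m$-homology cover of $G/G_n$, and verify the hypotheses of Theorem~\ref{Main} ($2$-connectedness, girth tending to infinity, and constancy of $N_e$ via the symmetry of the Cayley graphs). You merely spell out the verifications in more detail than the paper, which treats them as clear; in particular your observation that edge-transitivity needs both left translations and generator permutations is a small but welcome sharpening of the paper's one-line justification.
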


In particular, taking such a box space of a free group $\mathbb{F}_n$ with $n\geq 2$ gives new examples of bounded geometry metric spaces which coarsely embed into Hilbert space but do not have property A, since non-abelian free groups are not amenable.

\end{document}